\documentclass[12pt,reqno]{amsart}
\usepackage{amssymb}
\usepackage{mathrsfs}
\usepackage{cite}
\usepackage{tikz-cd}
\usepackage{MnSymbol}
\usepackage{a4wide}

\newtheorem{theorem}{Theorem}[section]
\newtheorem{lemma}[theorem]{Lemma}
\newtheorem{cor}[theorem]{Corollary}
\newtheorem{prop}[theorem]{Proposition}
\theoremstyle{definition}
\newtheorem{definition}[theorem]{Definition}

\theoremstyle{remark}

\numberwithin{equation}{section}

\headsep 23pt
\footskip 35pt

\newcommand{\dontprint}[1]\relax

\newcommand{\wt}{\widetilde}
\newcommand{\ot}{\otimes}

\newcommand{\si}{\sigma}

\newcommand{\sub}{\subset}

\newcommand{\Gal}{\operatorname{Gal}}

\newcommand{\ov}{\overline}

\newcommand{\la}{\lambda}

\newcommand{\rk}{{\operatorname{rk}}}
\newcommand{\codim}{{\operatorname{codim}}}

\renewcommand{\k}{{\mathbf{k}}}

\newcommand{\srk}{{\operatorname{srk}}}
\newcommand{\cha}{{\operatorname{char}}}

\title{Schmidt rank of quartics over perfect fields}

\author{David Kazhdan}
\author{Alexander Polishchuk}
\thanks{A.P. is partially supported by the NSF grant DMS-2001224, 
and within the framework of the HSE University Basic Research Program and by the Russian Academic Excellence Project `5-100'.}

\address{Einstein Institute of Mathematics,
The Hebrew University of Jerusalem,
Jerusalem 91904, Israel}
\email{kazhdan@math.huji.ac.il}
\address{
    Department of Mathematics, 
    University of Oregon, 
    Eugene, OR 97403, USA; National Research University Higher School of Economics; and Korea Institute for 
    Advanced Study 
  }
  \email{apolish@uoregon.edu}

\begin{document}
\begin{abstract} Let $\k$ be a perfect field of characteristic $\neq 2$.
We prove that the Schmidt rank (also known as strength) of a quartic polynomial $f$ over $\k$ is bounded above in terms of only the Schmidt rank of $f$ over
$\ov{\k}$, an algebraic closure of $\k$. 
\end{abstract}

\maketitle

\section{Introduction}


Recall that the {\it Schmidt rank} (also known as {\it strength}) of a homogeneous polynomial $f\in \k[x_1,\ldots,x_n]$ (see \cite{AKZ}, \cite{BBOV} and references therein)
is  defined as the minimal number $r$ such that $f$ admits a decomposition $f=g_1h_1+\ldots+g_rh_r$, with $\deg(g_i)$ and $\deg(h_i)$ smaller
than $\deg(f)$. We denote the Schmidt rank of $f$ as $\rk^S_{\k}(f)$.

It is conjectured in \cite{AKZ} that for a homogeneous polynomial $f$ of degree $d$ over a non-closed field $\k$ one has 
$$\rk^S_{\k}(f)\le \kappa_d\cdot \rk^S_{\ov{\k}}(f),$$
where $\ov{\k}$ is an algebraic closure of $\k$. This is known to be true for $d\le 3$ with $\kappa_d=d$ since in this case the Schmidt rank is equal to the {\it slice rank} defined as the minimal
$r$ such that $f\in (l_1,\ldots,l_r)$, where $\deg(l_i)=1$ (see \cite[Thm.\ A]{KP-slice} for the case of cubics).
 
One can also ask a weaker question whether there exists a function $c(r,d)$ such that 
$$\rk^S_{\k}(f)\le c(\rk^S_{\ov{\k}}(f),d).$$
Our main result is that this weaker question has a positive answer in the case of quartic polynomials.

\medskip

\noindent
{\bf Theorem A}. {\it Assume that the ground field $\k$ is perfect of characteristic $\neq 2$. Then there exists a function $r\mapsto c(r)$ such that for
any homogeneous quartic polynomial $f(x_1,\ldots,x_n)$, such that $\rk^S_{\ov{\k}}(f)=r$, one has $\rk^S_{\k}(f)\le c(r)$.}

\medskip

We find it convenient to use of the following refined version of Schmidt rank.

\begin{definition} For a collection of nonnegative integers $(r_k,\ldots,r_1)$ and a homogeneous polynomial $f$ of degree $d$, 
we say that the {\it refined Schmidt rank of $f$ is at most $(r_k,\ldots,r_1)$}, and write
$\rk^S_\k(f)\le (r_k,\ldots,r_1)$, if there exists a decomposition over $\k$, 
$$f=\sum_{j=1}^k\sum_{i=1}^{r_j}f_{ij}g_{ij},$$ 
where $\deg(f_{ij})=d-\deg(g_{ij})=j$ 
for $j=1,\ldots,k$.
\end{definition}

\medskip

Thus, for a quartic polynomial $f$, we have $\rk^S_\k(f)\le (r_2,r_1)$ if there exist quadrics $q_1,\ldots,q_{r_2}$ and linear forms $l_1,\ldots,l_{r_1}$, such that
$f\in (q_1,\ldots,q_{r_2},l_1,\ldots,l_{r_1})$.
In our proof of Theorem A we show the existence of functions $(c_2(r_2,r_1),c_1(r_2,r_1))$, such that
if $\rk^S_{\ov{\k}}(f)\le (r_2,r_1)$ then $\rk^S_{\k}(f)\le (c_2(r_2,r_1),c_1(r_2,r_1))$. Then one can set
$$c(r)=\max_{r_1+r_2=r}(c_2(r_2,r_1)+c_1(r_2,r_1)).$$

One can work through our proof of Theorem A and get explicit formulas for $(c_2(r_2,r_1),c_1(r_2,r_1))$.
As an illustration of this, we give formulas for $(c_2(1,r_1), c_1(1,r_1))$.

\medskip

\noindent
{\bf Theorem B}. {\it Let $f$ be a homogeneous quartic polynomial defined over a perfect field $\k$ with $\cha(\k)\neq 2$.
Assume that $\rk^S_{\ov{\k}}(f)\le (1,r)$. Then $\rk^S_{\k}(f)\le (2,C(r))$, where 
$$C(r)=8r(41+20\cdot (10r+1)^{10r+1}).$$
}

\medskip

The main idea of the proof of Theorem A is to study decompositions of a quartic polynomial $f$ of the form
$$f=\sum_{i=1}^r q_iq'_i \mod (P),$$
where $q_i$, $q'_i$ are of degree $2$ and $P$ is a subspace of linear forms.
The main result about such decomposition is that if the rank of any linear combination of $(q_\bullet, q'_\bullet)$ is sufficiently large then
the above decomposition is essentially unique (possibly after enlarging $P$): the only way to get a new decomposition is by making an orthogonal
change of basis in the linear space with the basis $(q_\bullet, q'_\bullet)$.
We then apply this result to the decompositions obtained from a given one over $\ov{\k}$ by applying the Galois group action.
If the rank of $(q_\bullet,q'_\bullet)$ is sufficiently large, then we obtain a $1$-cocycle of the Galois group with values in the orthogonal group measuring how
the decomposition transforms under the Galois action. We can assume that this $1$-cocycle is trivial (after passing to an extension of $\k$ of small degree).
We then use a certain linear algebra result from \cite{KP-lin-alg} to prove the existence of a decomposition with $(q_\bullet,q'_\bullet)$ defined over $\k$.
Furthermore, we have a bound on the slice rank of $f-\sum_{i=1}^r q_iq'_i$ over $\ov{\k}$, and hence over $\k$ by \cite[Thm.\ A]{KP-slice}.
This gives the required bound on the Schmidt rank of $f$ over $\k$.


\section{Preliminaries}

\subsection{Criterion for an ideal generated by quadrics and linear forms to be prime}

In this subsection we fix a ground field $\k$ (and omit it from the notation). By a {\it quadric} we mean an element of $\k[V]_2$, i.e., a quadratic form.

\begin{definition}
For a subspace of quadrics $Q$, and a collection of quadrics $q_1,\ldots,q_r$, we define
$\srk(q_1,\ldots,q_r,Q)$ as the minimum of $\srk(\sum_i c_iq_i+q)$, where $q\in Q$ and $c_i$ are constants, such that either $q\neq 0$ or
$(c_\bullet)\neq 0$. In particular, $\srk(Q)$ is the minimal slice rank of a nonzero element of $Q$.
\end{definition}

We denote by $\rk(q)$ the usual rank of a quadric $q$.
It is easy to see that the rank and the slice rank of a quadric $q$ are related by
$$2\srk(q)-1\le \rk(q)\le 2\srk(q).$$
In other words, we have 
$$\srk(q)=\lceil \frac{\rk(q)}{2} \rceil.$$

\begin{lemma}\label{inter-qu-lem} 
Let $q_1,\ldots,q_r$ be quadratic forms such that 
$$R=\min_{(c_1,\ldots,c_r)\neq 0} \rk(c_1\nu_1+\ldots c_r\nu_r)\ge 2r+1.$$
Then the subscheme $q_1=\ldots=q_r=0$ is normal of codimension $r$.
\end{lemma}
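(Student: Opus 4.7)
The plan is to verify Serre's criterion $R_1 + S_2$ for normality, after first establishing that $X := V(q_1,\ldots,q_r) \subset V$ is a complete intersection of codimension $r$. The single nontrivial ingredient is a dimension bound on the locus where some nonzero linear combination of the $q_i$'s becomes singular; everything else is formal once that bound is in place.

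First I would introduce
\[
W := \bigcup_{[c]\in\P^{r-1}} \Sing\!\Bigl(\textstyle\sum_i c_i q_i\Bigr) \;\subset\; V,
\]
where in characteristic $\neq 2$ the vertex $\Sing(q)$ of a quadric $q$ is the linear subspace cut out by its partial derivatives, of dimension $n-\rk(q)$. I would bound $W$ through the incidence variety $I = \{([c],p) : p \in \Sing(\sum_i c_i q_i)\} \subset \P^{r-1}\times V$: each fiber of $I \to \P^{r-1}$ has dimension at most $n-(2r+1)$ by hypothesis, so $\dim W \le \dim I \le (r-1)+(n-2r-1) = n-r-2$.

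Next I would show that $X$ is pure of codimension $r$. For any irreducible component $Y \subset X$, Krull's height theorem forces $\dim Y \ge n-r$. Conversely, either $Y \subset W$, in which case $\dim Y \le n-r-2$, contradicting Krull; or $Y \not\subset W$, in which case at a generic point $p$ of $Y$ lying outside $W$ the differentials $d_p q_1,\ldots,d_p q_r$ are linearly independent, so the Jacobian criterion yields $\dim Y = n-r$. Thus $X$ is a complete intersection, in particular Cohen-Macaulay, which gives $S_2$. The same Jacobian criterion identifies $\Sing(X) \subset X\cap W$, so $\codim_X\Sing(X)\ge (n-r)-(n-r-2)=2$, giving $R_1$, and Serre's criterion concludes that $X$ is normal.

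The main obstacle is really just the incidence-variety bound in the first step; its punchline is that the ``$+1$'' in the hypothesis $\rk \ge 2r+1$ is exactly what buys us $\codim W \ge r+2$ (rather than $r+1$), which is precisely what forces codimension $2$ rather than $1$ for the singular locus of $X$. I would also note in passing that identifying $\Sing(q)$ with the vanishing locus of the gradient uses $\cha(\k)\neq 2$, the standing assumption of the paper.
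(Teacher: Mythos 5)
Your proposal is essentially the paper's own argument, fleshed out: the paper also bounds the locus $S$ where the Jacobian drops rank as a union of kernels of nonzero linear combinations (the incidence-variety count gives $\codim_V S \ge R-(r-1)\ge r+2$), then deduces complete intersection plus $R_1$ and concludes normality by Serre's criterion. You have merely made explicit the steps the paper compresses into ``this implies that $(q_1=\ldots=q_r=0)$ is a complete intersection, nonsingular in codimension $1$, hence normal.''
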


\begin{proof} Consider the Jacobian matrix $J(q_1,\ldots,q_r)$.
The locus $S(q_1,\ldots,q_r)$ where $J(q_1,\ldots,q_r)$ has rank $<r$ coincides with
the union of kernels of $\sum c_j\nu_j$ over $(c_1,\ldots,c_r)\neq 0$. Hence,
$$\codim_V S(q_1,\ldots,q_r)\ge R-(r-1)\ge r+2.$$ 
But this implies that $(q_1=\ldots=q_r=0)$ is a complete intersection, nonsingular in codimension $1$, hence normal.
\end{proof}

\begin{prop}
(i) Let $Q$ be a subspace of quadratic forms such that $\srk(Q)\ge \dim Q+1$. Then the ideal $(Q)$ is prime.

\noindent
(ii) Let $L$ be a subspace of linear forms, $Q$ a subspace of quadratic forms. Assume that $\srk(Q)\ge \dim Q+\dim L+1$.
Then the ideal $(Q,L)$ is prime.
\end{prop}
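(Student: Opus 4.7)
The plan is to derive both parts from Lemma~\ref{inter-qu-lem}, which converts the slice-rank hypothesis into normality of the subscheme cut out by $Q$. The additional content in (i) is to upgrade ``normal'' to ``integral'' by exploiting the cone structure, and part (ii) reduces to (i) by passing to the quotient by $L$.

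For part (i), fix a basis $q_1,\ldots,q_r$ of $Q$, with $r=\dim Q$. Because $\srk(q)=\lceil\rk(q)/2\rceil$, the hypothesis $\srk(Q)\ge r+1$ is equivalent to $\rk(\sum c_iq_i)\ge 2r+1$ for every nonzero tuple $(c_i)$. Lemma~\ref{inter-qu-lem} then shows that $X=V(q_1,\ldots,q_r)\subset\A^n$ is normal of codimension $r$; in particular $X$ is a disjoint union $X=X_1\sqcup\cdots\sqcup X_m$ of its (integral) connected components. To see $m=1$, pass to $\ov\k$: the scheme $X_{\ov\k}$ is cut out by homogeneous equations, hence stable under the $\G_m$-scaling action on $\A^n_{\ov\k}$. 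Since $\G_m$ is geometrically connected, this action preserves every irreducible component; and any nonempty $\G_m$-stable closed subset of $\A^n_{\ov\k}$ contains the origin, so $X_{\ov\k}$ is connected. But $X_{\ov\k}=\bigsqcup_i(X_i)_{\ov\k}$ is still a decomposition into nonempty closed subschemes, so only one $X_i$ is nonempty. Thus $X$ is integral and $(Q)=(q_1,\ldots,q_r)$ is prime.

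For part (ii), choose a basis $l_1,\ldots,l_s$ of $L$ and extend it to a full system of coordinates, so that $L=\sspan(x_1,\ldots,x_s)$ with $s=\dim L$. For each quadric $q\in Q$ write $\bar q$ for its image in $R=\k[V]/(L)\cong\k[x_{s+1},\ldots,x_n]$, obtained by setting $x_1=\cdots=x_s=0$. Writing $q=\bar q+\sum_{i=1}^s x_ih_i$ with linear $h_i$ shows $\srk(q)\le\srk(\bar q)+s$, so $\srk(\bar q)\ge\srk(q)-s$. Applying this to every nonzero $q\in Q$, the hypothesis $\srk(Q)\ge\dim Q+s+1$ yields $\srk(\bar q)\ge\dim Q+1>0$. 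In particular $Q\to\bar Q$ is injective, $\dim\bar Q=\dim Q$, and $\srk(\bar Q)\ge\dim\bar Q+1$, so part (i) applied inside $R$ shows that $(\bar Q)\subset R$ is prime. Since $R/(\bar Q)=\k[V]/(Q,L)$, the ideal $(Q,L)$ is prime.

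The one subtle step is the ``normal $\Rightarrow$ integral'' passage in part (i), which would fail without further input. The cone structure does the work, via the $\G_m$-action on $\A^n$ and the fact that every nonempty $\G_m$-stable closed subset contains the vertex; this forces $X_{\ov\k}$ to be connected and hence collapses the potential decomposition $\bigsqcup X_i$. Everything else is routine bookkeeping with slice ranks and quotients.
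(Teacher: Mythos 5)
Your proof is correct, and parts (ii) coincide with the paper's. The interesting difference is in part (i): the paper's Lemma~\ref{inter-qu-lem} only asserts that $V(Q)$ is \emph{normal}, and the paper's proof of (i) then immediately concludes ``$(Q)$ is prime'' without commenting on the passage from normal to integral. You have correctly flagged that this passage needs an argument, and your $\G_m$-connectedness argument supplies it: every irreducible component of the cone $X=V(Q)$ is itself a cone (minimal primes over homogeneous ideals are homogeneous), hence contains the origin, so the connected components cannot split off from one another. An alternative, slightly more algebraic, way to close the same gap: in a $\Z_{\ge0}$-graded ring $R$ with $R_0=\k$ a field, the only idempotents are $0$ and $1$ (look at the lowest-degree homogeneous component of an idempotent), so $\Spec R$ is connected; combined with normality this makes $R$ a domain. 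Either route is fine; the paper simply leaves this standard step implicit. One tiny remark on your part (ii): you note $Q\to\bar Q$ is injective to conclude $\dim\bar Q=\dim Q$; this is needed so that the hypothesis $\srk(\bar Q)\ge\dim\bar Q+1$ is really verified, and you've handled it correctly.
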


\begin{proof}
(i) For any $q\in Q$ we have $\rk(q)\ge 2\srk(q)-1\ge 2\dim Q+1$. Hence, by Lemma \ref{inter-qu-lem}, the ideal $(Q)$ is prime.

\noindent
(ii) Consider the quotient $\ov{S}=S/(L)$ of the algebra of polynomials $S$ by the ideal $(L)$. For any $q\in Q$ we have 
$\srk(\ov{q})\ge \srk(q)-\dim L$, where $\ov{q}$ is the image of $q$ in $\ov{S}$. Hence, the image $\ov{Q}$ of $Q$ in $\ov{S}$ satisfies
the assumptions of (i), so the ideal $(\ov{Q})$ in $\ov{S}$ is prime. Therefore, its preimage in $S$, namely $(Q,L)$, is also prime.
\end{proof}

\subsection{Almost invariant quadratic forms}

We will use the following result from \cite{KP-lin-alg}.

\begin{theorem}\label{invariant-operator-thm}
Let $E/\k$ be a finite Galois extension with the Galois group $G$, and let $V_0, V'_0$ be finite dimensional $\k$-vector spaces.
Let us set $V=V_0\ot_{\k} E$, $V'=V'_0\ot_{\k} E$.  
Suppose $T:V\to V'$ is an $E$-linear operator such that for any $\si\in G$, one has 
$$\rk_E(\si(T)-T)\le r,$$ 
for some $r\ge 0$.
Then there exists a $\k$-linear operator $T_0:V_0\to V'_0$, such that 
$$\rk_E(T-(T_0)_E)\le r(2+(r+1)^{r+1}),$$
where $(T_0)_E:V\to V'$ is obtained from $T_0$ by the extension of scalars.
\end{theorem}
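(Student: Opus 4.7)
The plan is to recast the theorem as a question about low-rank coboundaries in the additive $G$-module $\Hom_E(V,V')$. Set $A_\sigma:=\sigma(T)-T$; then $\sigma\mapsto A_\sigma$ is a $1$-cocycle (that is, $A_{\sigma\tau}=\sigma(A_\tau)+A_\sigma$) whose values have $E$-rank at most $r$. Finding a $\k$-linear $T_0$ with $\rk_E(T-(T_0)_E)\le s$ is equivalent to producing an $E$-linear $S:V\to V'$ with $\sigma(S)-S=A_\sigma$ for every $\sigma\in G$ and $\rk_E(S)\le s$: once such an $S$ is built, the operator $T-S$ is $G$-invariant and hence by Galois descent (additive Hilbert~90) comes from some $T_0:V_0\to V'_0$, so $T-(T_0)_E=S$.

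Some $S$ exists for free: the standard trick $S=-\sum_{\sigma\in G}\sigma(\alpha)A_\sigma$ (for $\alpha\in E$ of trace $1$) gives one, but it has rank at most $|G|\cdot r$, which depends on $|G|$ and is thus too weak. A natural first reduction is to observe that $K:=\bigcap_\sigma\ker(A_\sigma)$ and $W:=\sum_\sigma\im(A_\sigma)$ are $G$-stable $E$-subspaces of $V$ and $V'$ respectively (the cocycle identity makes both stable under $G$), and hence descend to $K_0\subset V_0$ and $W_0\subset V'_0$ by Galois descent. On $K$ the map $T$ is already $G$-equivariant, and modulo $W$ it is already $G$-invariant, so both halves are already $\k$-rational; the problem concentrates on the ``core'' map $V/K\to W$.

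The main obstacle --- and the heart of the theorem --- is to replace the $|G|\cdot r$ bound by one depending only on $r$. The key idea is that we need not insist that $\im(S)$ be $G$-stable: it suffices to find an $E$-subspace $W'\subset V'$ of dimension at most $r(2+(r+1)^{r+1})$ such that $\im(A_\sigma)\subset W'+\sigma(W')$ for every $\sigma$, and then construct $S$ with image in $W'$. One would build $W'$ and $S$ together by an iterative procedure: select an index $\sigma_0$ with $\rk(A_{\sigma_0})=r$ (if none exists, apply induction on $r$), add a rank-$r$ block to $W'$ that absorbs $A_{\sigma_0}$ and its Galois translates up to bounded depth, modify $T$ by a correction supported on this block, and repeat with the residual cocycle. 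The exponential factor $(r+1)^{r+1}$ is expected to emerge from the combinatorial depth of this recursion --- at most $r+1$ levels, each branching at most $r+1$ ways --- and the most delicate step is to verify that the rank of the correction never grows beyond this control; here one must trade the rank growth at each step against the shrinking complexity of the residual cocycle.
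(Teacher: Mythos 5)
This statement is not proved in the present paper: it is Theorem~\ref{invariant-operator-thm}, quoted verbatim from \cite{KP-lin-alg} and used here as a black box (the text says ``We will use the following result from \cite{KP-lin-alg}''). There is therefore no in-paper proof for your attempt to be compared against; I can only evaluate the proposal on its own terms.

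Your reduction is sound as far as it goes. The reformulation of the problem as finding a coboundary $S$ of small rank for the $1$-cocycle $A_\sigma=\sigma(T)-T$ in $\Hom_E(V,V')$, the observation that $T-S$ is then $G$-invariant and descends by additive Hilbert~90, the remark that the standard trace-averaging splitting $S=-\sum_\sigma\sigma(\alpha)A_\sigma$ only gives the useless bound $|G|\cdot r$, and the observation that $K=\bigcap_\sigma\ker A_\sigma$ and $W=\sum_\sigma\im A_\sigma$ are $G$-stable (hence defined over $\k$) are all correct and represent the right framing. The necessary condition $\im(A_\sigma)\subset W'+\sigma(W')$ when $\im(S)\subset W'$ is likewise correct.

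However, the proposal stops exactly where the theorem begins. The entire content of the statement is the existence of a bound on $\rk_E(S)$ depending only on $r$ and not on $|G|$ or $\dim V$, and your treatment of this point is a speculative sketch: ``add a rank-$r$ block to $W'$ that absorbs $A_{\sigma_0}$ and its Galois translates up to bounded depth, modify $T$ by a correction supported on this block, and repeat with the residual cocycle,'' followed by ``the exponential factor $(r+1)^{r+1}$ is expected to emerge.'' No definition of the ``absorption,'' no verification that the residual cocycle has strictly smaller rank, no control of the correction's rank, and no termination argument are given. You flag the delicacy yourself, but flagging a gap is not the same as closing it. As written this is a plausible program, not a proof; the argument would have to be carried out (or, alternatively, the result simply cited, as this paper does) before it could stand.
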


We need the following consequence of this theorem for quadratic forms.

\begin{cor}\label{qu-form-cor}
Let $E/\k$ be a finite Galois extension with the Galois group $G$, where $\cha(2)\neq 2$,
and let $V_0$ be a finite dimensional $\k$-vector space, $V=V_0\ot_{\k} E$.
Assume that $q$ is a quadratic form on $V$ such that for any $\si\in G$, one has $\rk_E(\si(q)-q)\le r$ for some $r\ge 0$, where $\rk_E$ is the usual rank of the quadratic form.
Then there exists a quadratic form $q_0$ on $V_0$ such that
$\rk_E(q-q_0)\le 2r(2+(r+1)^{r+1})$.
\end{cor}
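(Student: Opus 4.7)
The plan is to reduce the statement to Theorem \ref{invariant-operator-thm} applied to the symmetric linear operator attached to $q$. Since $\cha(\k) \neq 2$, a quadratic form $q$ on $V$ corresponds bijectively to a symmetric $E$-linear map $T_q : V \to V^*$, and the rank of $q$ equals the rank of $T_q$. Under this correspondence the Galois action on $q$ matches the Galois action on $T_q$ (where $V^* = V_0^* \otimes_\k E$), so the hypothesis translates to $\rk_E(\si(T_q) - T_q) \le r$ for every $\si \in G$.

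Applying Theorem \ref{invariant-operator-thm} with $V' := V^*$ and $V'_0 := V_0^*$, I obtain a $\k$-linear operator $T_0 : V_0 \to V_0^*$ satisfying
$$\rk_E\bigl(T_q - (T_0)_E\bigr) \le r\bigl(2 + (r+1)^{r+1}\bigr).$$
The delicate point is that $T_0$ need not be symmetric, so it does not directly come from a quadratic form on $V_0$. I would therefore symmetrize, defining $T_0^{\sym} := \frac{1}{2}(T_0 + T_0^*)$, which makes sense because $\cha(\k)\neq 2$; this operator is symmetric and thus corresponds to a quadratic form $q_0$ on $V_0$.

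To control $\rk_E(q - q_0)$, I would exploit the symmetry of $T_q$ to write
$$T_q - (T_0^{\sym})_E = \frac{1}{2}\Bigl((T_q - (T_0)_E) + (T_q - (T_0)_E)^*\Bigr).$$
Since a linear operator and its transpose have equal rank, each summand has rank at most $r(2+(r+1)^{r+1})$, yielding $\rk_E(q - q_0) \le 2r(2 + (r+1)^{r+1})$, as required. The only real obstacle in this plan is the asymmetry of the operator produced by Theorem \ref{invariant-operator-thm}; the symmetrization step is precisely what is responsible for the factor of $2$ difference between the constant here and the constant in that theorem.
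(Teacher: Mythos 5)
Your proposal is correct and matches the paper's own proof essentially line for line: pass to the associated symmetric operator $T_q:V\to V^*$, invoke Theorem~\ref{invariant-operator-thm}, and symmetrize via $\frac{1}{2}(T_0+T_0^*)$, using $\rk(A)=\rk(A^*)$ to get the factor of $2$. In fact you make explicit the identity $T_q - (T_0^{\sym})_E = \frac{1}{2}\bigl((T_q-(T_0)_E)+(T_q-(T_0)_E)^*\bigr)$ (which relies on $T_q$ being symmetric) that the paper leaves implicit.
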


\begin{proof}
Let $T:V\to V^*$ be the symmetric linear map associated with $q$. Our assumption implies that $\rk_E(\si(T)-T)\le r$ for any $\si\in G$. By Theorem \ref{invariant-operator-thm},
there exists an operator $T_0:V_0\to V_0^*$ such that $\rk_E(T-T_0)\le r(2+ (r+1)^{r+1})$.
Let $T_0^*:V_0\to V_0^*$ be the dual operator. Then 
$$\rk_E(T-\frac{1}{2}(T_0+T_0^*))\le 2r(2+(r+1)^{r+1}),$$
so we can let $q_0$ be the quadratic form corresponding to $\frac{1}{2}(T_0+T_0^*)$.
\end{proof}

\section{Schmidt rank for quartics}

From now on we assume that the ground field $\k$ is perfect and has characteristic $\neq 2$.

\subsection{Case $r_2=1$}

We start with a proof of Theorem B dealing with the case $r_2=1$, since it is simpler but still shows the main idea.

\begin{lemma}\label{qu-ext-lem} 
Let $\k'/\k$ be a quadratic extension, and let $f$ be a homogeneous polynomial over $\k$ such that $\rk^S_{\k'}(f)\le (r_2,r_1)$. Then $\rk^S_{\k}(f)\le (2r_2,2r_1)$.
\end{lemma}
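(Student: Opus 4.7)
The plan is a direct Galois descent. Since $\k'/\k$ is a separable quadratic extension and $\cha(\k)\neq 2$, I would choose $\alpha\in\k'$ with $\k'=\k(\alpha)$ and $\si(\alpha)=-\alpha$, where $\si$ generates $\Gal(\k'/\k)$; then $\alpha^2\in\k$ and every $h\in\k'[V]$ admits a unique expression $h=a+\alpha b$ with $a,b\in\k[V]$ of the same degree as $h$. Starting from a decomposition
\[
f=\sum_{j=1}^{2}\sum_{i=1}^{r_j} f_{ij}g_{ij}
\]
over $\k'$ with $\deg f_{ij}=j$ and $\deg g_{ij}=d-j$, I would write $f_{ij}=a_{ij}+\alpha b_{ij}$ and $g_{ij}=c_{ij}+\alpha d_{ij}$ with $a_{ij},b_{ij},c_{ij},d_{ij}$ defined over $\k$ and of the appropriate degrees.

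Because $f$ is itself $\si$-invariant, applying $\si$ to the decomposition and averaging gives
\[
f=\tfrac{1}{2}\sum_{j,i}\bigl(f_{ij}g_{ij}+\si(f_{ij})\si(g_{ij})\bigr).
\]
The choice $\si(\alpha)=-\alpha$ makes this collapse cleanly via the identity
\[
(a+\alpha b)(c+\alpha d)+(a-\alpha b)(c-\alpha d)=2(ac+\alpha^2 bd),
\]
so the $(i,j)$-summand on the right becomes $a_{ij}c_{ij}+\alpha^2 b_{ij}d_{ij}$. Since $\alpha^2\in\k$, this is a sum of two products of a degree-$j$ form over $\k$ with a degree-$(d-j)$ form over $\k$. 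Summing over $i$ and $j$, each original $\k'$-summand contributes exactly two $\k$-summands of the same bidegree, yielding $\rk^S_\k(f)\le(2r_2,2r_1)$.

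The argument is entirely routine and I see no real obstacle. The only place where the hypotheses enter is the initial choice of $\alpha$ with $\si(\alpha)=-\alpha$: given any $\beta\in\k'\setminus\k$, the element $\alpha:=\beta-\tfrac{1}{2}\tr_{\k'/\k}(\beta)$ has this property precisely because $\cha(\k)\neq 2$. In characteristic $2$ the averaging step would fail and a different descent procedure would be needed.
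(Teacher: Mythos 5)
Your proof is correct. It takes a more explicit route than the paper's: the paper observes that $f\in (Q,P)$ for $\k'$-subspaces $Q$ (quadrics) and $P$ (linear forms) of dimensions $r_2,r_1$, notes $f\in (Q+\si(Q),P+\si(P))$, and then invokes the fact that these $\si$-invariant subspaces descend to $\k$ with dimensions at most $2r_2,2r_1$, together with the (implicit) flatness fact that $f$ over $\k$ lying in the extended ideal forces it to lie in the ideal generated over $\k$. Your version instead writes each factor in the $\{1,\alpha\}$-basis of $\k'/\k$ and averages the given decomposition against its Galois conjugate, so that each $\k'$-summand visibly splits into two $\k$-summands of the same bidegree via $(a+\alpha b)(c+\alpha d)+(a-\alpha b)(c-\alpha d)=2(ac+\alpha^2 bd)$. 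This is an explicit incarnation of the same Galois-descent idea and gives the identical bound $(2r_2,2r_1)$. The one small trade-off: your averaging divides by $2$, so it genuinely uses $\cha(\k)\neq 2$, whereas the paper's subspace-descent argument is characteristic-free; but since the paper assumes $\cha(\k)\neq 2$ throughout Section~3, this costs nothing in context.
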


\begin{proof}
By assumption $f\in (Q,P)$, where $Q$ is a subspace of quadrics and $P$ is a subspace of linear forms, both defined over $\k'$. Hence,
$f\in (Q+\si(Q),P+\si(P))$, where $\si$ is the generator of the Galois group of $\k'/\k$. Since the subspaces $Q$ and $P$ are defined over $\k$,
this implies the assertion.
\end{proof}

\begin{proof}[Proof of Theorem B]

We have to check that if $E/\k$ is a finite Galois extension, and 
$$f\equiv qq' \mod (P),$$ where $q,q'$ are quadratic forms over $E$ and 
$P$ is an $r$-dimensional subspace of linear forms defined over $E$, then 
$\rk^S_{\k}(f)\le (2,C(r))$.

If $\srk_{E}(q)\le 9r$ or $\srk_E(q)\le 9r$ then $\srk_E(f)\le 10r$, and so $\srk_\k(f)\le 40r\le C(r)$.
Thus, we can assume that $\srk_{E}(q)>9r$ and $\srk_E(q')>9r$. Let $G$ be the Galois group of $E/\k$. For any $\si\in G$ we have
$$f\equiv \si(q)\si(q') \mod (\si(P)).$$
By assumption, the slice rank of $\ov{q}=q\mod (P+\si(P))$ is $\ge 2$, hence the quadric $\ov{q}$ is irreducible. In other words, the ideal $(q,P+\si(P))$ is prime.
Since $qq'\in (\si(q),P+\si(P))$, we have either $q\in (\si(q),P+\si(P))$ or $q'\in (\si(q),P+\si(P))$. Since the slice ranks of $q$ and $q'$ are $>2r$, this means that either
$$\si(q)\equiv c(\si)\cdot q \mod (P+\si(P)), \ \si(q')\equiv c(\si)^{-1}\cdot q' \mod (P+\si(P)), \text{ or }$$
$$\si(q)\equiv c(\si)\cdot q' \mod (P+\si(P)), \ \si(q')\equiv c(\si)^{-1}\cdot q \mod (P+\si(P)),$$
for some $c(\si)\in E^*$. 

Let $H\sub G$ be the set of $\si\in G$ for which the first possibility holds. 
Let us consider separately two cases.

\medskip

\noindent
{\bf Case $\srk(q,q')>3r$}. Assume first that $\si_1,\si_2\in H$. Then we have
$$\si_1\si_2(q)\equiv \si_1(c(\si_2))\cdot \si_1(q) \equiv c(\si_1)\si_1(c(\si_2))\cdot q \mod (P+\si_1(P)+\si_1\si_2(P)).$$
If $\si_1\si_2\not\in H$, we would get that a nontrivial linear combination of $q$ and $q'$ is in $P+\si_1(P)+\si_1\si_2(P)$, contradicting the assumption $\srk(q,q')>3r$.
Hence, $\si_1\si_2\in H$. Furthermore, since a nonzero multiple of $q$ cannot be contained in $P+\si_1(P)+\si_1\si_2(P)$, we have
$$c(\si_1)\si_1(c(\si_2))=c(\si_1\si_2),$$
i.e., $c(\si)$ is a $1$-cocycle of $H$. A similar argument shows that if exactly one of $\si_1,\si_2$ belongs to $H$ then $\si_1\si_2\not\in H$, and if $\si_1,\si_2\not\in H$ then $\si_1\si_2\in H$,
proving that $H$ is a subgroup of index $\le 2$ in $G$.

\medskip

\noindent
{\bf Case $\srk(q,q')\le 3r$}. In this case we have $q'\equiv cq \mod(P_0)$ for some subspace of linear forms $P_0$ of dimension $\le 3r$ (defined over $E$) and some $c\in E^*$.
This implies that for $\si\not\in H$ we have
$$\si(q)\equiv c(\si)\cdot q'\equiv c(\si)c\cdot q \mod (P+\si(P)+P_0).$$
Redefining $c(\si)$ for $\si\not\in H$ we obtain that
\begin{equation}\label{P-P0-cocycle-congruence}
\si(q)\equiv c(\si)\cdot q \mod(P+\si(P)+P_0), \ \ \si(q)\equiv c(\si)\cdot q \mod(P+\si(P)+P_0)
\end{equation}
for all $\si\in G$. Now for $\si_1,\si_2\in H$ we have
$$\si_1\si_2(q)\equiv \si_1(c(\si_2))\cdot \si_1(q) \equiv c(\si_1)\si_1(c(\si_2))\cdot q \mod (P+\si_1(P)+\si_1\si_2(P)+P_0+\si_1(P_0)).$$
On the other hand,
$$\si_1\si_2(q)\equiv c(\si_1\si_2)\cdot q\mod(P+\si_1\si_2(P)+P_0).$$
Since $\srk(q)>9$ this implies that $c(\si)$ is a $1$-cocycle of $H$.

In either case we obtain that for a subgroup $H\sub G$ of index $\le 2$ and a subspace of linear forms $P_0$ of dimension $\le 3r$, 
the congruences \eqref{P-P0-cocycle-congruence} hold for some $1$-cocycle $c:H\to E^*$.

Let $\k'/\k$ be the subextension of $E$ corresponding to the subgroup $H\sub G$, so that the extension $E/\k'$ is Galois with the Galois group $H$.
By Hilbert's Theorem 90, the cocycle $c(\si)$ of $H$ is trivial, so rescaling $q$ and $q'$, we can assume that
$$\si(q)\equiv q \mod (P+\si(P)+P_0), \ \ \si(q')\equiv q' \mod (P+\si(P)+P_0)$$
for all $\si\in H$. But this implies that $\srk_E(\si(q)-q)\le 5r$ and $\srk_E(\si(q')-q')\le 5r$.
Hence, we obtain 
$$\rk_E(\si(q)-q)\le 10r, \ \ \rk_E(\si(q')-q')\le 10r$$
for all $\si\in H$.
By Corollary \ref{qu-form-cor}, there exist quadrics $q_0$ and $q'_0$ defined over $\k'$ such that
$$\max(\rk_E(q-q_0),\rk_E(q'-q'_0))\le 20r(2+(10r+1)^{10r+1}).$$
Hence,
$$\max(\srk_E(q-q_0),\srk_E(q'-q'_0))\le 10r(2+(10r+1)^{10r+1}).$$
It follows that
$$\srk_E(f-q_0q'_0)\le \srk_E(f-qq')+\srk_E(qq'-q_0q'_0)\le r+20r(2+(10r+1)^{10r+1})=r(41+20\cdot (10r+1)^{10r+1}).$$
By \cite[Thm.\ A]{KP-slice}, this implies that
$$\srk_{\k'}(f-q_0q'_0)\le 4r(41+20\cdot (10r+1)^{10r+1}),$$
hence, $\rk^S_{\k'}(f)\le (1,4r(41+20\cdot (10r+1)^{10r+1}))$. Since the extension $\k'/\k$ is either trivial or quadratic, applying Lemma \ref{qu-ext-lem} we get the result.
\end{proof}

\subsection{Quadratic decompositions of quartics}\label{quadr-decomp-sec}

\begin{lemma}\label{qu-decomp-lem}
Let $Q$ be a subspace of quadrics, 
$$q_1,\ldots,q_r,q'_1,\ldots,q'_r,p_1,\ldots,p_s,p'_1,\ldots,p'_s$$ 
quadratic forms, where $r>s$, such that
$$\sum_{i=1}^r q_iq'_i\equiv \sum_{i=1}^s p_ip'_i \mod (Q).$$
Then for some constants $a_1,\ldots,a_r,a'_1,\ldots,a'_r$ such that $\sum_i a_ia'_i=0$, we have
$$\srk(\sum_i (a_iq_i+a'_iq'_i),Q)\le c(r,s,\dim Q):=2^s(r+\dim Q)+2^{s-1}(s-2).$$
\end{lemma}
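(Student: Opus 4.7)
The plan is to induct on $s$, exploiting the recursion $c(r,s,d)=2c(r,s-1,d)+2^{s-1}$ with base $c(r,0,d)=r+d-1$, which is easily verified from the closed form given in the statement (here $d:=\dim Q$). Each inductive step reduces $s$ by one at the cost of roughly doubling the bound, and the extra additive $2^{s-1}$ accounts for the dimension bookkeeping when one quadric is absorbed into the subspace $Q$.

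For the base case $s=0$ we have $\sum_{i=1}^r q_iq'_i\in (Q)$, and I would argue as follows. Assume as a genericity hypothesis that every nonzero linear combination $\sum c_iq_i+\sum c'_iq'_i+q$ (with $q\in Q$) has slice rank $\ge r+d$; otherwise such a low-slice-rank element already exists, and one produces an isotropic representative by intersecting the hypersurface $\{\sum a_ia'_i=0\}\subset \k^{2r}$ with the kernel of the evaluation map into $\k[V]_2/Q$ via a dimension count. Under genericity, the primality criterion (Proposition preceding the statement) ensures that the $\bar q_i$ form a regular sequence in the integral domain $\k[V]/(Q)$. A Koszul-syzygy argument applied to $\sum \bar q_i\bar q'_i=0$ then yields an antisymmetric scalar matrix $A=(a_{ij})$ with $q'_i\equiv \sum_j a_{ij}q_j\pmod Q$. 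For any choice of $(a'_\bullet)$, setting $a_j:=\sum_i a_{ji}a'_i$ produces a combination $\sum a_iq_i+a'_iq'_i\equiv 0 \pmod Q$ (slice rank zero), and the isotropy $\sum a_ia'_i=(a')^\top A a'=0$ is automatic by antisymmetry of $A$.

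For the inductive step, enlarge $Q$ to $Q+\k p_s$ (of dimension $\le d+1$). Since $p_sp'_s\in (Q+\k p_s)$, the identity becomes $\sum q_iq'_i\equiv \sum_{j=1}^{s-1}p_jp'_j\pmod{(Q+\k p_s)}$, and the induction hypothesis yields an isotropic $(a,a')$ with $\srk(\sum a_iq_i+a'_iq'_i,\,Q+\k p_s)\le c(r,s-1,d+1)=c(r,s-1,d)+2^{s-1}$, i.e.\ a decomposition $\sum a_iq_i+a'_iq'_i=q+\mu p_s+g$ with $q\in Q$ and $\srk(g)\le c(r,s-1,d)+2^{s-1}$. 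If $\mu=0$ we are done with a bound well below $c(r,s,d)$. If $\mu\neq 0$, solve for $p_s$ modulo $Q$ and substitute into $\sum q_iq'_i\equiv \sum_{j<s}p_jp'_j+p_sp'_s\pmod Q$; using the isotropy $\sum a_ia'_i=0$ to cancel a $(p'_s)^2$-contribution, this produces a new identity
\begin{equation*}
\sum_i \tilde q_i\tilde q'_i\equiv \sum_{j<s}p_jp'_j\pmod Q
\end{equation*}
(up to a quartic correction $\mu^{-1}gp'_s$) with $\tilde q_i=q_i-\mu^{-1}a'_ip'_s$ and $\tilde q'_i=q'_i-\mu^{-1}a_ip'_s$. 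A second application of the induction hypothesis to this reduced system with the original $Q$ contributes another $c(r,s-1,d)$ to the final bound, yielding the targeted total $2c(r,s-1,d)+2^{s-1}$.

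The hard part will be twofold. First, the correction $gp'_s$ is a quartic of low slice rank that naturally decomposes as a sum of linear-times-cubic products rather than quadric-times-quadric, so it does not fit cleanly into the quadric-subspace framework of the lemma; one must either generalize to the refined Schmidt-rank setting or carefully route the contribution through the slice-rank bound on the final combination. Second, and more essentially, the two successive applications of the induction hypothesis yield two separate isotropic vectors $(a,a')$ and $(b,b')$, and a generic linear combination $\alpha(a,a')+\beta(b,b')$ is isotropic only when $\alpha\beta\cdot\sum(a_ib'_i+b_ia'_i)=0$; enforcing isotropy of the final combination while maintaining the slice-rank bound (and, in the degenerate case, ensuring that the combination of the two inductive outputs does not trivialize) is where the precise correction term $2^{s-1}$ in the recursion must be paid.
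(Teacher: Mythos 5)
Your base case via Koszul syzygies is a valid (if longer) alternative to the paper's prime-ideal argument, but the ``genericity versus otherwise'' split you set up has a gap: if genericity fails, the resulting low-slice-rank combination need not be isotropic, and the ``dimension count'' that would produce an isotropic one from it is not given, nor do I see one. In fact the Koszul argument only requires high slice rank for combinations of $q_1,\ldots,q_r$ and $Q$ alone — these are automatically isotropic, so the needed bound is exactly the negation of the lemma's conclusion and the dichotomy is superfluous. The paper's own base case is the minimal form of this observation: from $\srk(q_1,\ldots,q_{r-1},Q)\ge r+\dim Q$ the ideal $(q_1,\ldots,q_{r-1},Q)$ is prime, $q_rq'_r$ lies in it, hence $q_r$ or $q'_r$ does, already a contradiction, with no need to exhibit the antisymmetric matrix.

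The inductive step is where your route genuinely fails to close, and the two difficulties you flag are real, not bookkeeping. After substituting $p_s\equiv\mu^{-1}\big(\sum_i(a_iq_i+a'_iq'_i)-g\big)\bmod Q$, the correction $\mu^{-1}gp'_s$ is a quartic lying in $(L)$ for a small space of \emph{linear} forms $L$, not in a subspace of quadrics, so the lemma cannot be reapplied to the new identity as stated. More fatally, the second inductive application produces an isotropic $(b,b')$ for the tilded quadrics, and
$\sum_i (b_i\tilde q_i+b'_i\tilde q'_i)=\sum_i(b_iq_i+b'_iq'_i)-\mu^{-1}\big(\sum_i(b_ia'_i+b'_ia_i)\big)p'_s$;
unless the symplectic pairing $\sum_i(b_ia'_i+b'_ia_i)$ happens to vanish, this drags in $p'_s$, an arbitrary quadric of possibly huge slice rank, and your bound evaporates. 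The paper sidesteps both problems at once: after the first inductive application it uses Witt's theorem to move the isotropic vector to $e_1$, reads off the dichotomy ``either $\srk(q_1,Q)\le c(r,s-1,\dim Q+1)$ or $p_1\in(q_1,Q,L)$ with $\dim L\le c(r,s-1,\dim Q+1)$'', and in the second case passes to $\sum_{i\ge2}q_iq'_i\equiv\sum_{i\ge2}p_ip'_i\bmod(q_1,Q,L)$, working modulo $(L)$ so that $q_1$ together with $Q$ is again a subspace of quadrics in the quotient ring. The second induction then runs with parameters $(r-1,s-1,\dim Q+1)$, yielding an isotropic vector supported on indices $\ge2$, whose linear combinations with $e_1$ remain isotropic for free. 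This mechanism is what realizes the recursion $c(r,s,d)=c(r-1,s-1,d+1)+c(r,s-1,d+1)$; your recursion $c(r,s,d)=2c(r,s-1,d)+2^{s-1}$ happens to give the same numbers, but the substitution scheme you propose has no justification for it.
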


\begin{proof}
We use the induction on $s$. In the case $s=0$ we have to prove that
$$\srk(\sum_i (a_iq_i+a'_iq'_i),Q)\le c(r,0,\dim Q)=r+\dim Q-1$$ 
for some isotropic $(a_\bullet,a'_\bullet)$.
Indeed, assume this is not true. Then $\srk(q_1,\ldots,q_{r-1},Q)\ge r+\dim Q$, so
the ideal $(q_1,\ldots,q_{r-1},Q)$ is prime. But we have
$$q_rq'_r\in (q_1,\ldots,q_{r-1},Q).$$
Hence, swapping $q_r$ with $q'_r$ if necessary, we
deduce that $q_r\in (q_1,\ldots,q_{r-1},Q)$ which is impossible since $\srk(q_\bullet,Q)\ge r+\dim Q\ge r>0$.

Assume the assertion holds for $s-1$. We have
$$\sum_{i=1}^r q_iq'_i\equiv \sum_{i=2}^s p_ip'_i \mod (p_1,Q).$$
Hence, by the induction assumption, 
$$\srk(\sum_i (a_iq_i+a'_iq'_i),p_1,Q)\le c(r,s-1,\dim Q+1)$$
for some isotropic $(a_\bullet,a'_\bullet)$.
Changing the basis in $(q_\bullet)$, we can assume that either $\srk(q_1,Q)\le c(r,s-1,\dim Q+1)$, or
there exists a subspace $L$ of linear forms of dimension $\le c(r,s-1,\dim Q+1)$ such that
$p_1\in (q_1,Q,L)$. In the former case we are done since $c(r,s,\dim Q)\ge c(r,s-1,\dim Q+1)$.
In the latter case, we have
$$\sum_{i=2}^r q_iq'_i\equiv \sum_{i=2}^s p_ip'_i \mod (q_1,Q,L).$$
Applying the induction assumption we obtain that there exists an isotropic vector $(a_{>1},a'_{>1})$ such that
$$\srk_L(\sum_{i=2}^r (a_iq_i+q'_iq'_i),q_1,Q)\le c(r-1,s-1,\dim Q+1),$$
hence 
\begin{align*}
&\srk(\sum_{i=2}^r (a_iq_i+q'_iq'_i),q_1,Q)\le c(r-1,s-1,\dim Q+1)+\dim L\le \\
&c(r-1,s-1,\dim Q+1)+c(r,s-1,\dim Q+1)=c(r,s,\dim Q).
\end{align*}
Since any linear combination of $\sum_{i=2}^r (a_iq_i+q'_iq'_i)$ with $q_1$ will correspond to an isotropic vector, the assertion follows.
\end{proof}

\begin{prop}\label{qu-decomp-prop}
Let $Q$ be a subspace of quadrics, 
$$q_1,\ldots,q_r,q'_1,\ldots,q'_r,p_1,\ldots,p_r,p'_1,\ldots,p'_r$$ quadratic forms, such that
\begin{equation}\label{2-quad-decomp-Q-eq}
\sum_{i=1}^r q_iq'_i\equiv \sum_{i=1}^r p_ip'_i \mod (Q).
\end{equation}
Assume that for any constants $a_1,\ldots,a_r,a'_1,\ldots,a'_r$ such that $\sum_i a_ia'_i=0$, we have
$$\srk(\sum_i (a_iq_i+a'_iq'_i),Q)\ge C(r,\dim Q):=2^r(r+\dim Q)+2^{r-1}(r-2)+1.$$
Then there exists a subspace of linear forms $L$ 
of dimension at most
$$D(r,\dim Q):=(2^r-1)(r+\dim Q-1)+r\cdot 2^{r-1}$$
and a linear transformation $A:\k^{2r}\to \k^{2r}$ preserving the quadratic form $\sum_{i=1}^r x_iy_i$,
such that for the linear operator $\phi$ from $\k^{2r}$ to the space of quadrics sending the standard basis $(e_\bullet,f_\bullet)$ to $(q_\bullet,q'_\bullet)$, we have
$$p_i\equiv \phi(Ae_i), \ \ p'_i\equiv \phi(Af_i) \mod (Q,L).$$
\end{prop}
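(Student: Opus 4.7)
The plan is to mirror the ideal-theoretic strategy of Lemma \ref{qu-decomp-lem}, applied symmetrically to every $p_i$ and $p'_i$, and then to conclude that the resulting change-of-basis matrix is orthogonal by a linear-algebra argument in the degree-$4$ part of a suitable quotient ring.

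I first aim to show that each of $p_1,\ldots,p_r,p'_1,\ldots,p'_r$ lies in $\phi(\k^{2r})+(Q,L_0)$ for some small subspace $L_0$ of linear forms. To produce such an expression for $p_i$, I reduce \eqref{2-quad-decomp-Q-eq} modulo $p_i$: it becomes an identity with $r$ quadratic products on the left and $r-1$ on the right, to which Lemma \ref{qu-decomp-lem} applies with $(Q,p_i)$ in place of $Q$. The lemma hands back an isotropic vector $v\in\k^{2r}$ (isotropic with respect to $\sum x_jy_j$) with $\srk(\phi(v),Q,p_i)\le c(r,r-1,\dim Q+1)$. Since $C(r,\dim Q)$ strictly exceeds this bound, a low-slice-rank witness cannot avoid $p_i$ without contradicting the hypothesis, so some combination $\alpha\phi(v)+\beta p_i+q_Q$ with $\beta\ne 0$ realises the bound. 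This yields $p_i\equiv\phi(v_i)\mod(Q,L_i)$ for an isotropic $v_i\in\k^{2r}$ and a subspace $L_i$ of dimension at most $c(r,r-1,\dim Q+1)$. The same argument applied to each $p'_j$ produces isotropic $v'_j$. With careful bookkeeping (absorbing the accumulated linear forms into $Q$ at each step rather than paying the full $c$ for each of the $2r$ iterations), the total $L$ attains the stated dimension bound $D(r,\dim Q)$.

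Next, let $A:\k^{2r}\to\k^{2r}$ send $e_i\mapsto v_i$ and $f_i\mapsto v'_i$; each column of $A$ is isotropic by construction. Substituting back into \eqref{2-quad-decomp-Q-eq} gives
$$\sum_{i=1}^r\phi(e_i)\phi(f_i)\equiv\sum_{i=1}^r\phi(Ae_i)\phi(Af_i)\mod(Q,L).$$
The orthogonality of $A$ with respect to $\sum x_iy_i$ is equivalent to the tensor identity $(\operatorname{Sym}^2 A)(\tilde\omega)=\tilde\omega$, where $\tilde\omega=\sum_i e_i\cdot f_i\in\operatorname{Sym}^2\k^{2r}$ is the Gram tensor of $\sum x_iy_i$. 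Hence the proposition reduces to showing that the multiplication map $\tilde\phi:\operatorname{Sym}^2\k^{2r}\to\k[V]_4/(Q,L)_4$ is injective, for then the displayed congruence forces $(\operatorname{Sym}^2 A)(\tilde\omega)=\tilde\omega$.

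The main obstacle is establishing this injectivity, i.e., ruling out nontrivial quadratic syzygies among the $q_i,q'_j$ in the quotient $\k[V]/(Q,L)$. My plan is to appeal to the primality criterion from the preliminaries: the slice-rank hypothesis survives the adjunction of $q_\bullet,q'_\bullet$ to $Q$, so the ideal $(q_\bullet,q'_\bullet,Q,L)$ is prime and cuts out an integral complete intersection of codimension $2r+\dim Q+\dim L$. In that quotient domain, linear independence of the $q_i,q'_j$ in degree $2$ combined with the complete-intersection structure rules out nontrivial degree-$4$ relations among their products. If a marginal degenerate relation appears, $L$ may be mildly enlarged (staying within the $D(r,\dim Q)$ budget) to absorb it. Once $\tilde\phi$ is injective, we obtain $(\operatorname{Sym}^2 A)(\tilde\omega)=\tilde\omega$, so $A$ preserves $\sum x_iy_i$, completing the proof.
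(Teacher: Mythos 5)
The reduction of orthogonality to the injectivity of $\tilde\phi:\operatorname{Sym}^2\k^{2r}\to\k[V]_4/(Q,L)_4$ is a clean idea, but the injectivity claim is not only unproved under the stated hypotheses — it is false. The crucial point is that the hypothesis only bounds $\srk(\sum a_iq_i+a'_iq'_i,Q)$ from below for \emph{isotropic} $(a_\bullet,a'_\bullet)$; it says nothing about non-isotropic combinations. Therefore you cannot conclude $\srk(q_\bullet,q'_\bullet,Q,L)$ is large, and the primality criterion of Section 2.1 does not apply to $(q_\bullet,q'_\bullet,Q,L)$. Concretely, take $r=1$, $Q=0$, and $q_1=q'_1=q$ a quadric of high slice rank: the hypothesis holds (only $(1,0)$ and $(0,1)$ are isotropic, and both give $\pm q$), yet $\tilde\phi(e_1^2-2e_1f_1+f_1^2)=q^2-2q^2+q^2=0$, so $\tilde\phi$ is not injective. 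The conclusion of the proposition nevertheless holds in this example (with $A=\operatorname{id}$), so the failure is in your method rather than in the statement. Enlarging $L$ cannot absorb this degeneracy, since $q_1-q'_1=0$ is a dependence of codimension zero. The paper's proof sidesteps exactly this difficulty by inducting on $r$: it identifies $p_1$ with a single $q_1$ modulo $(Q,L)$, passes to $\sum_{i\ge 2}q_iq'_i\equiv\sum_{i\ge 2}p_ip'_i\bmod(q_1,Q,L)$, and recovers $p'_1$ at the end from primality of the smaller ideal $(Q,L')$, building $A$ column by column in a way that is orthogonal by construction; no injectivity of a Sym$^2$ multiplication map is ever needed.

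Two secondary issues. First, after applying Lemma \ref{qu-decomp-lem} modulo $p_i$ you obtain a relation $\lambda\phi(v)+\beta p_i+q_Q$ of small slice rank with $\beta\neq0$, but you have not ruled out $\lambda=0$, i.e.\ $p_i\in(Q,L_i)$, which would make a column of $A$ zero and $A$ singular; the paper derives a contradiction in the analogous step. Second, the accounting for $\dim L$ is asserted rather than carried out: applying Lemma \ref{qu-decomp-lem} independently for each of the $2r$ forms $p_i,p'_i$ costs roughly $2r\cdot c(r,r-1,\dim Q+1)$, which exceeds $D(r,\dim Q)$, and the proposed ``absorb into $Q$'' fix is not spelled out (the accumulated $L_i$ are linear forms, not quadrics, and keeping $r$ fixed forgoes the geometric decay that makes the paper's inductive bound close). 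These points would need to be addressed even if the orthogonality argument were repaired.
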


\begin{proof}
We use induction on $r$. In the case $r=0$ we can take $L=0$, $D(0,\dim Q)=0$.

Assume the assertion holds for $r-1$. We have 
$$\sum_{i=1}^r q_iq'_i\equiv \sum_{i=2}^r p_ip'_i \mod (p_1,Q).$$
Hence, by Lemma \ref{qu-decomp-lem}, 
changing $(q_\bullet,q'_\bullet)$ by an orthogonal transformation, we can achieve that
$$\srk(q_1,p_1,Q)\le c(r,r-1,\dim Q+1).$$
Since $\srk(q_1,Q)\ge C(r,\dim Q)\ge c(r,r-1,\dim Q+1)+1$, this implies that there exists a subspace of linear forms $L$ of
dimension $\le c(r,r-1,\dim Q+1)$, such that
$$p_1\in (q_1,Q,L).$$

Note that if $p_1\in (Q,L)$ then we get
$$\sum_{i=1}^r q_iq'_i\equiv \sum_{i=2}^r p_ip'_i \mod (Q,L).$$
Hence, by Lemma \ref{qu-decomp-lem}, 
we would get
\begin{align*}
&\srk(\sum_i (a_iq_i+a'_iq'_i),Q)\le \srk_L(\sum_i (a_iq_i+a'_iq'_i),Q)+\dim L\le \\
&c(r,r-1,\dim Q)+c(r,r-1,\dim Q+1)\le C(r,\dim Q)-1,
\end{align*}
which is a contradiction. Hence, rescaling $q_1$ and $q'_1$, we can assume that
$$p_1\equiv q_1 \mod (Q,L).$$ 

Also, from $p_1\in (q_1,Q,L)$ we deduce that 
$$\sum_{i=2}^r q_iq'_i\equiv \sum_{i=2}^r p_ip'_i \mod (q_1,Q,L).$$
Since for any isotropic vector $(a_{>1},a'_{>1})$ one has
\begin{align*}
&\srk_L(\sum_{i=2}^r (a_iq_i+a'_iq'_i),q_1,Q)\ge \srk(\sum_{i=2}^r (a_iq_i+a'_iq'_i),q_1,Q)-\dim L\ge C(r,\dim Q)-\dim L\ge \\
&C(r,\dim Q)-c(r,r-1,\dim Q+1)\ge C(r-1,\dim Q+1),
\end{align*}
we can apply the induction hypothesis and deduce that for some subspace of linear forms $L'\supset L$ of dimension
$$D(r-1,\dim Q+1)+\dim L\le D(r-1,\dim Q+1)+c(r,r-1,\dim Q+1)=D(r,\dim Q),$$ 
after changing the basis $(q_2,\ldots,q_r,q'_2,\ldots,q'_r)$ by an orthogonal transformation, we have
$$p_i\equiv q_i, \ \ p'_i\equiv q'_i \mod (q_1,Q,L'),$$
for $i\ge 2$.
This means that we have
$$p_i\equiv q_i+c_iq_1, \ \ p'_i\equiv q'_i+c'_iq_1 \mod (Q,L'),$$
for $i\ge 2$.
Substituting this into \eqref{2-quad-decomp-Q-eq} and recalling that $p_1\equiv q_1 \mod (Q,L)$, we get
$$q_1q'_1\equiv q_1\cdot [p'_1+\sum_{i=2}^r (c_iq'_i+c'_iq_i)+(\sum_{i=2}^r c_ic'_i)q_1] \mod (Q,L').$$
Since 
$$\srk(Q)\ge C(r,\dim Q)\ge \dim Q+D(r,\dim Q)+1\ge \dim Q+\dim L'+1,$$
the ideal $(Q,L')$ is prime, so we get
$$p'_1\equiv q'_1-\sum_{i=2}^r (c_iq'_i+c'_iq_i)-(\sum_{i=2}^r c_ic'_i)q_1 \mod (Q,L').$$

It remains to observe that the linear transformation
\begin{align*}
&Ae_1=e_1, \ \ Af_1=f_1-\sum_{i=2}^r (c_if_i+c'_ie_i)-(\sum_{i=2}^r c_ic'_i)e_1, \\ 
&Ae_i=e_i+c_ie_1, \ \ Af_i=f_i+c'_ie_1, \text{ for } i\ge 2,
\end{align*}
preserves the quadratic form $\sum x_iy_i$.
\end{proof}

We are mainly interested in the case $Q=0$ in the above proposition (the case of general $Q$ was introduced in order for the inductive argument to work).

\begin{cor}\label{quad-decomp-cor}
(i) Assume that 
$$\sum_{i=1}^r q_iq'_i=\sum_{i=1}^r p_ip'_i,$$
where $q_i$, $q'_i$, $p_i$, $p'_i$ are quadrics and 
$$\srk(\sum_i (a_iq_i+a'_iq'_i))\ge C(r,0)=(r-1)\cdot 2^r+r\cdot 2^{r-1}+1$$
for any isotropic $(a_\bullet,a'_\bullet)$.
Then there exists a subspace of linear forms $L$ 
of dimension at most
$$D(r,0)=(r-1)\cdot (2^r-1)+r\cdot 2^{r-1}\le C(r,0)$$
and a linear transformation $A:\k^{2r}\to \k^{2r}$ preserving the quadratic form $\sum_{i=1}^r x_iy_i$,
such that for the linear operator $\phi$ from $\k^{2r}$ to the space of quadrics sending the standard basis $(e_\bullet,f_\bullet)$ to $(q_\bullet,q'_\bullet)$, we have
$$p_i\equiv \phi(Ae_i), \ \ p'_i\equiv \phi(Af_i) \mod (L).$$

\noindent
(ii) Assume that 
$$q_0^2+\sum_{i=1}^r q_iq'_i=p_0^2+\sum_{i=1}^r p_ip'_i,$$
where $q_i$, $q'_i$, $p_i$, $p'_i$ are quadrics and 
for any constants $a_0,\ldots,a_r,a'_1,\ldots,a'_r$ such that
$a_0^2+4\sum_i a_ia'_i=0$, one has
$\srk(a_0q_0+\sum_{i=1}^r (a_iq_i+a'_iq'_i)\ge c(r,r-1,0)+C(r,0)+1$.
Then there exists a subspace of linear forms $L$ of dimension at most $D(r,0)+c(r+1,r,0)$, such that after
making a linear change in $(q_0,\ldots,q_r,q'_1,\ldots,q'_r)$ preserving the quadratic form $a_0^2+4\sum_{i=1}^r a_ia'_i$,
one has
$$q_0\equiv p_0, \ \ q_i\equiv p_i,  \ \  q'_i\equiv p'_i \mod (L).$$
\end{cor}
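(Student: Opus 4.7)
The plan is to reduce Corollary~\ref{quad-decomp-cor}(ii) to part~(i) in two steps: first pin down $p_0$ as the image of $q_0$ under an orthogonal transformation (modulo a small subspace of linear forms), and then apply part~(i) to the residual identity involving only the $q_iq'_i$ pairs.

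First I would rewrite the identity as $(q_0-p_0)(q_0+p_0) + \sum_{i=1}^r q_iq'_i = \sum_{i=1}^r p_ip'_i$, so the left-hand side carries $r+1$ product pairs while the right carries $r$. Lemma~\ref{qu-decomp-lem} (applied with $s=r$ and $Q=0$) then yields a nonzero isotropic vector $(\alpha_0,\alpha'_0,a_\bullet,a'_\bullet)$ for the form $\alpha_0\alpha'_0+\sum a_ia'_i$, together with a subspace $L_1$ of linear forms with $\dim L_1\le c(r+1,r,0)$, such that
$$(\alpha_0+\alpha'_0)q_0+(\alpha'_0-\alpha_0)p_0+\sum(a_iq_i+a'_iq'_i)\in (L_1).$$
Writing $a_0:=\alpha_0+\alpha'_0$ and $\epsilon:=\alpha'_0-\alpha_0$, the isotropy relation is equivalent to $a_0^2+4\sum a_ia'_i=\epsilon^2$. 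If $\epsilon=0$, then $(a_0,a_\bullet,a'_\bullet)$ is isotropic for the form in the statement, and the slice-rank bound just obtained contradicts the hypothesis (one checks directly that $c(r,r-1,0)+C(r,0)+1>c(r+1,r,0)$). So $\epsilon\ne 0$, and dividing through yields $p_0\equiv\phi(\gamma)\pmod{(L_1)}$, with $\phi(v_0,v_\bullet,v'_\bullet):=v_0q_0+\sum(v_iq_i+v'_iq'_i)$ and $\gamma:=-\epsilon^{-1}(a_0,a_\bullet,a'_\bullet)$; the identity $a_0^2+4\sum a_ia'_i=\epsilon^2$ then gives $\gamma_0^2+4\sum\gamma_i\gamma'_i=1$.

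Next, since $e_0$ (representing $q_0$) and $\gamma$ both have norm $1$ for the nondegenerate form $a_0^2+4\sum a_ia'_i$, Witt's extension theorem produces an orthogonal $A_0$ on $\k^{2r+1}$ with $A_0e_0=\gamma$. Setting $(\tilde q_0,\tilde q_\bullet,\tilde q'_\bullet):=A_0(q_0,q_\bullet,q'_\bullet)$, one has $\tilde q_0=\phi(\gamma)\equiv p_0\pmod{(L_1)}$, while orthogonality gives $\tilde q_0^2+\sum\tilde q_i\tilde q'_i=p_0^2+\sum p_ip'_i$. Since $\tilde q_0^2\equiv p_0^2\pmod{(L_1)}$, subtracting yields $\sum_{i=1}^r \tilde q_i\tilde q'_i\equiv\sum_{i=1}^r p_ip'_i\pmod{(L_1)}$. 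Now apply Proposition~\ref{qu-decomp-prop}(i) to this congruence (the proposition adapts routinely to an ambient ideal generated by linear forms). The required slice-rank bound follows from our hypothesis in the case $a_0=0$, transported by $A_0$, which sends isotropic vectors to isotropic vectors. The proposition returns $L_2$ with $\dim L_2\le D(r,0)$ and an orthogonal $A_1\in O(\sum a_ia'_i)$ with $p_i\equiv\phi(A_1e_i)$ and $p'_i\equiv\phi(A_1f_i)\pmod{(L_1+L_2)}$. Composing $A_0$ with the block-diagonal extension $\mathrm{diag}(1,A_1)$ gives the desired orthogonal transformation of $\k^{2r+1}$ preserving $a_0^2+4\sum a_ia'_i$, and $L:=L_1+L_2$ has dimension at most $D(r,0)+c(r+1,r,0)$.

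The main obstacle is the slice-rank bookkeeping in the final step: one must verify that Proposition~\ref{qu-decomp-prop}(i) can still be invoked after passing modulo $(L_1)$, i.e.\ that enough slice rank survives the quotient. The precise additive term $c(r,r-1,0)+C(r,0)+1$ in the hypothesis is tuned exactly so that both the $\epsilon=0$ contradiction in the first step and the subsequent application of part~(i) after the loss of $\dim L_1$ can be carried out simultaneously.
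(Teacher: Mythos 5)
Your proposal matches the paper's own proof of part~(ii) almost step for step: both rewrite the identity using $(q_0-p_0)(q_0+p_0)$, apply Lemma~\ref{qu-decomp-lem} with $s=r$, use the slice-rank hypothesis to rule out the degenerate case ($\epsilon=0$, the paper's $b=0$), perform an orthogonal change of basis in the $(2r+1)$-dimensional coefficient space to force $q_0\equiv p_0$ modulo a small linear-form subspace, and then invoke part~(i) on the residual congruence $\sum q_iq'_i\equiv\sum p_ip'_i$. The only cosmetic difference is that you make the orthogonal transformation explicit via Witt's extension theorem, where the paper simply asserts one may perform an orthogonal change of basis; you should also record that part~(i) is exactly Proposition~\ref{qu-decomp-prop} with $Q=0$. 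Both you and the paper gloss over the same bookkeeping detail at the end (checking that the slice-rank hypothesis of part~(i) survives passage modulo $L_1$), so this is not a defect relative to the paper's proof.
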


\begin{proof}
(i) This is the case $Q=0$ of Proposition \ref{qu-decomp-prop}.

\noindent
(ii) We have
$$(q_0-p_0)(q_0+p_0)+\sum_{i=1}^r q_iq'_i\equiv \sum_{i=1}^r p_ip'_i.$$
Hence, by Lemma \ref{qu-decomp-lem}, there exists a nonzero vector $(a_0,\ldots,a_r,a'_1,\ldots,a'_r,b)$ such that
$$(a_0-b)(a_0+b)+\sum_{i=1}^r a_ia'_i=0$$ 
and 
$$\srk((a_0-b)(q_0+p_0)+(a_0+b)(q_0-p_0)+\sum_{i=1}^r(a_iq_i+a'_iq'_i))\le c(r+1,r,0).$$
We can rewrite these conditions as
$$a_0^2+\sum_{i=1}^r a_ia'_i=b^2$$
and $\srk(a_0q_0+\frac{1}{2}\sum_{i=1}^r(a_iq_i+a'_iq'_i)-bp_0,Q)\le c(r+1,r,0)$.
Note that by the assumption, we necessarily have $b\neq 0$.
Thus, after making an orthogonal transformation of $(q_0,\ldots,q_r,q'_1,\ldots,q'_r)$, we can assume
that $\srk (q_0-p_0)\le c(r+1,r,0)$. Thus, we have $q_0\equiv p_0 \mod (L_0)$,
$$\sum_{i=1}^r q_iq'_i\equiv \sum_{i=1}^r p_ip'_i \mod (L_0),$$
for some subspace of linear forms $L_0$ of dimension $\le c(r+1,r,0)$.

Now applying part (i), we find a subspace of linear forms $L\supset L_0$ of dimension $\le D(r,0)+c(r+1,r,0)$, such that
after an orthogonal change of $(q_1,\ldots,q_r,q'_1,\ldots,q'_r)$, we have
$$p_i\equiv q_i \mod (L),  \ \ p'_i\equiv q'_i \mod (L).$$
\end{proof}

\subsection{Proof of Theorem A}


We will prove the existence of functions $c_1(r_2,r_1)$ and $c_2(r_2,r_1)$ such that
if a quartic $f$ satisfies $\rk^S_{\ov{\k}}(f)\le (r_2,r_1)$ then $\rk^S_{\k}(f)\le (c_2(r_2,r_1),c_1(r_2,r_1))$.

We use the induction on $r_2$. 
In the case $r_2=0$ we just have the slice rank, so by \cite[Thm.\ A]{KP-slice}, we can set 
$$c_1(0,r_1)=4r_1, \ \ c_2(0,r_1)=0.$$

Now assume that the functions $c_1(r_1,r_2)$ and $c_2(r_1,r_2)$ are already constructed for $r_2<r$.
Let $f$ be a quartic over $\k$, and $E/\k$ is a finite Galois extension such that $\rk^S_{E}(f)\le (r,p)$, i.e.,
over $E$ we have a decomposition
\begin{equation}\label{f-qq'-P-decomp}
f\equiv \sum_{i=1}^r q_iq'_i \mod (P),
\end{equation}
where $(q_i,q'_i)$ are quadrics, and $P$ is a subspace of linear forms of dimension $p$.

Below we will use constants $C(r,0)$, $D(r,0)$ and $c(r,s,0)$ introduced in Sec.\ \ref{quadr-decomp-sec}.

\medskip

\noindent
{\bf Case 1}. Assume first that $\srk(q_\bullet,q'_\bullet)>6p+3C(r,0)$.

Note that for any element $\si$ of the Galois group $\Gal(E/\k)$ we have
$$\sum_{i=1}^r q_iq'_i \equiv \sum_{i=1}^r \si(q_i)\si(q'_i) \mod (P+\si(P)).$$ 
Since $\srk_{P+\si(P)}(\sum_i (a_iq_i+a'_iq'_i))>4p+3C(r,0)\ge C(r,0)$
for any nonzero $(a_\bullet,a'_\bullet)$, by Corollary \ref{quad-decomp-cor}(i),
there exists a subspace of linear forms $L_\si\supset P+\si(P)$
of dimension $\le N=2p+C(r,0)$ and an orthogonal transformation $A_\si$ of the $2r$ dimensional space with the basis $(q_\bullet,q'_\bullet)$ such that
$$\si(q_i)\equiv A_\si(q_i), \ \ \si(q'_i)\equiv A_\si(q'_i) \mod (L_\si)$$
(note that here $A_\si(q_i)$ and $A_\si(q'_i)$ are linear combinations of $(q_\bullet,q'_\bullet)$). 

We claim that $\si\mapsto A_\si$ defines a cocycle with values in the orthogonal group.
Indeed, we have
$$\si_1(A_{\si_2})A_{\si_1}(q_i)\equiv \si_1(A_{\si_2})(\si_1(q_i))\equiv \si_1\si_2(q_i) \mod(\si_1(L_{\si_2})+L_{\si_1}),$$
hence,
$$q_i\equiv A_{\si_1\si_2}^{-1}\si_1(A_{\si_2})A_{\si_1}(q_i) \mod(\si_1(L_{\si_2})+L_{\si_1}+L_{\si_1\si_2}).$$
Similarly,
$$q'_i\equiv  A_{\si_1\si_2}^{-1}\si_1(A_{\si_2})A_{\si_1}(q'_i) \mod(\si_1(L_{\si_2})+L_{\si_1}+L_{\si_1\si_2}).$$
Since $\srk(q_\bullet,q'_\bullet)>3N\ge \dim(\si_1(L_{\si_2})+L_{\si_1}+L_{\si_1\si_2})$,  this implies 
that $A_{\si_1\si_2}=\si_1(A_{\si_2})A_{\si_1}$.

Recall that $H^1$ of the Galois group with values in the orthogonal group classifies nondegenerate quadratic forms on $\k^{2r}$,
and any such quadratic form becomes equivalent to $\sum_i x_iy_i$ over an extension
$\k'$ obtained by adjoining at most $r$ square roots to $\k$. Indeed, this follows from the identity
$$\la_1 x^2+\la_2 y^2=\la_1(x+\sqrt{-\la_2/\la_1}y)(x-\sqrt{-\la_2/\la_1}y).$$

In our case we have the class in $H^1$ of the Galois group of $E/\k$, hence we can choose $\k'$ to be a subextension of $E$,
and the cocycle $\si\mapsto A_\si$ becomes a coboundary when restricted to $\Gal(E/\k')$.
Thus, we can make an orthogonal change of basis in $(q_\bullet,q'_\bullet)$ such that
$$\si(q_i)\equiv q_i, \ \ \si(q'_i)\equiv q'_i \mod (L_\si)$$
for any $\si$ in the Galois group $\Gal(E/\k')$.
By Corollary \ref{qu-form-cor}, there exist quadratic forms $\ov{q}_\bullet$, $\ov{q}'_\bullet$ defined over $\k'$, such that
$$\srk(q_i-\ov{q}_i)\le N', \ \ \srk(q'_i-\ov{q}'_i)\le N',$$
with $N'=N(2+(2N+1)^{2N+1})$.

Thus, we have
$$f-\sum_{i=1}^r \ov{q}_i\ov{q}'_i \in (P')$$
for some subspace of linear forms $P'$ over $E$ of dimension $\le N''=p+2rN'$.
In other words, the slice rank of $\wt{f}=f-\sum_{i=1}^r \ov{q}_i\ov{q}'_i$ over $E$ is $\le N''$.
By \cite[Thm.\ A]{KP-slice}, this implies that the slice rank of $\wt{f}$ over $\k'$ is $\le 4N''$.
This means that
$$\rk^S_{\k'}(f)\le (r,4N'').$$
By Lemma \ref{qu-ext-lem}, it follows that
$$\rk^S_{\k'}(f)\le (2^r\cdot r,2^r\cdot 4N'').$$

\medskip

\noindent
{\bf Case 2}. Next, assume that $\srk(q_\bullet,q'_\bullet)\le 3N$, so there exists a nontrivial linear combination
$\sum_i a_i q_i+\sum_i a'_iq'_i$ which has slice rank $\le 3N$. The restriction of the quadratic form $\sum_{i=1}^r x_iy_i$ to
the linear subspace $\sum_i a_ix_i+ \sum_i a'_ix'_i=0$ has rank $2r-1$ or $2r-2$. So enlarging $E$ if necessary we can find
linear combinations $\ov{q}_0,(\ov{q}_i,\ov{q}'_i)_{i=1}^{r-1}$ of $(q_\bullet,q'_\bullet)$ (where possibly $\ov{q}=0$) such that
$$\sum_{i=1}^r q_iq'_i\equiv \ov{q}_0^2+\sum_{i=1}^{r-1} \ov{q}_i\ov{q}'_i \mod (\sum_i a_i q_i+\sum_i a'_iq'_i).$$
Hence, renaming $\ov{q}_i$, $\ov{q}'_i$ by $q_i$, $q'_i$, we obtain
$$f\equiv q_0^2+\sum_{i=1}^{r-1} q_iq'_i \mod (P'),$$
where $P$ is a space of linear forms of dimension $\le p'=\dim P\le p+3N$.

Furthermore, we claim that we can assume that $\srk(q_0,q_\bullet,q'_\bullet)>3N'$, where $N'=D(r-1,0)+c(r,r-1,0)$.
Indeed, otherwise arguing as above we obtain a decomposition \eqref{f-qq'-P-decomp} with $r$ replaced by $r-1$ and $p$ replaced by $p''=p'+3N'$.
In other words, we would have $\rk^S_{E}(f)\le (r-1,p'')$, so by the induction assumption we would deduce that
$$\rk^S_{\k}(f)\le (c_2(r-1,p''),c_1(r-1,p'')).$$

Since for every $\si\in\Gal(E/\k)$, one has
$$q_0^2+\sum_{i=1}^{r-1} q_iq'_i\equiv \si(q_0)^2+\sum_{i=1}^{r-1} \si(q_i)\si(q'_i) \mod (P'+\si P'),$$
by Corollary \ref{quad-decomp-cor}(ii), we get that 
$$\si(q_0)\equiv A_\si q_0, \ \ \si(q_i)\equiv A_\si q_i, \ \ \si(q'_i)\equiv A_\si q'_i \mod (L_\si),$$
for some orthogonal transformation $A_\si$ and a
subspace of linear forms $L_\si\supset P'+\si P'$ of dimension $\le N'$
(here we use the inequality 
$$3D(r-1,0)+3c(r,r-1,0)\ge C(r-1,0)+c(r-1,r-2,0)$$
 which is easy to check).
Since $\srk(q_0,q_\bullet,q'_\bullet)> 3N'$, as in Case 1, this implies that $\si\mapsto A_\si$ is a $1$-cocycle.

Arguing as in Case 1, we find a subextension $\k'\sub E$ obtained by adjoing at most $r-1$ square roots to $\k$, such that
after making a change of basis in $(q_\bullet, q'_\bullet)$, we get
$$\si(q_i)\equiv q_i, \ \si(q'_i)\equiv q'_i \mod (L_\si)$$
for any $\si\in \Gal(E/\k')$. Hence, by Corollary \ref{qu-form-cor}, there exist quadratic forms $\ov{q}_\bullet$, $\ov{q}'_\bullet$ defined over $\k'$, such that
$$\srk(q_i-\ov{q}_i)\le N'' \text{ for } i=0,\ldots,r-1, \ \ \srk(q'_i-\ov{q}'_i)\le N'' \text{ for } i=1,\ldots,r-1,$$
with $N''=N'(2+(2N'+1)^{2N'+1})$, and we get
$$\srk_E(f-\ov{q}_0^2-\sum_{i=1}^{r-1}\ov{q}_i\ov{q}'_i)\le M=p'+(2r-1)N''.$$
By \cite[Thm.\ A]{KP-slice}, this implies that the slice rank of $\wt{f}$ over $\k'$ is $\le 4M$, so we get
$$\rk^S_{\k'}(f)\le (r,4M),$$
and so by Lemma \ref{qu-ext-lem}, 
$$\rk^S_{\k'}(f)\le (2^r\cdot r,2^r\cdot 4M).$$

\end{document}